\newdimen\symskip
\newdimen\defskip
\newdimen\parind
\newdimen\leftmarge
\newdimen\theoremshape
\newcommand*{\clei}{\nobreak\hskip\z@skip}
\renewcommand{\"}{''}
\renewcommand{\:}{\textup{:}}
\renewcommand{\~}{\textup{;}}
\DeclareRobustCommand*{\д}{\clei\hbox{-}\clei}
\newcommand{\no}{}
\renewcommand{\@listI}{\settowidth\labelwidth{\labheadi{\no}}\listipar{\parind}{\labelwidth}}
\newcommand{\listivpar}{\topsep\defskip\partopsep0pt\parsep-\parskip\itemsep0.5\topsep}
\newcommand{\listipar}[2]{\rightmargin0pt\leftmargin#1\labelsep#1\advance\labelsep-#2\itemindent0pt\listivpar}
\renewcommand{\@listii}{\settowidth\labelwidth{\labheadii{\@roman{\no}}}\listiipar{\parind}{\labelwidth}}
\newcommand{\listiivpar}{\topsep0.5\defskip\partopsep0pt\parsep-\parskip\itemsep0.5\topsep}
\newcommand{\listiipar}[2]{\rightmargin0pt\leftmargin#1\labelsep#1\advance\labelsep-#2\itemindent0pt\listiivpar}
\def\thempfn{\ifcase\value{footnote}1\or *\or **\or ***\else\@ctrerr\fi}
\renewcommand\footnoterule{%
  \kern-3\p@
  \hrule\@width1in
  \kern2.6\p@}
\renewcommand{\@biblabel}[1]{[#1]}
\renewenvironment{thebibliography}[1]
     {\renewcommand{\refname}{References}%
      \renewcommand{\No}{}%
      \section*{\refname}%
      \@mkboth{\MakeUppercase\refname}{\MakeUppercase\refname}%
      \list{\@biblabel{\@arabic\c@enumiv}}%
           {\itemsep\baselineskip
            \leftmargin\parind
            \settowidth\labelwidth{\@biblabel{#1}}%
            \labelsep\parind\advance\labelsep-\labelwidth
            \@openbib@code
            \usecounter{enumiv}%
            \let\p@enumiv\@empty
            \renewcommand\theenumiv{\@arabic\c@enumiv}}%
      \sloppy
      \clubpenalty4000
      \@clubpenalty\clubpenalty
      \widowpenalty4000%
      \sfcode`\.\@m}
     {\def\@noitemerr
       {\@latex@warning{Empty `thebibliography' environment}}%
      \endlist}
\def\@maketitle{%
  \newpage
  \vskip0.5em%
  UDK \udk%
  \vskip0.5em%
  MSC \msc%
  \vskip1em%
  \begin{center}\bf%
  \let\footnote\thanks%
   {\Large\@author\par}%
   \vskip1.5em%
   {\LARGE\@title\par}%
   \vskip1em%
   {\large\@date}%
  \end{center}%
  \par
  \vskip1.5em}
\def\@title{\@latex@warning@no@line{No \noexpand\title given}}
\renewcommand\sectionmark[1]{%
 \markright{%
  \ifnum \c@secnumdepth >\z@
   \thesection. \ %
  \fi
 #1}}%
\renewcommand{\section}{\@startsection{section}{1}{0pt}%
{5.5ex plus .5ex minus .2ex}{1.5ex plus .3ex}%
{\center\normalfont\Large\bfseries\sffamily\bom}}
\renewcommand{\subsection}{\@startsection{subsection}{2}{0pt}%
{4.5ex plus .4ex minus .2ex}{0.75ex plus .2ex}%
{\center\normalfont\large\bfseries\sffamily\bom}}
\renewcommand{\subsubsection}{\@startsection{subsubsection}{3}{0pt}%
{2.5ex plus .5ex minus .2ex}{1ex plus .2ex}%
{\center\normalfont\bfseries\sffamily\bom}}
\def\@postskip@{\hskip.5em\relax}
\def\postsection{.\@postskip@}
\def\postsubsection{.\@postskip@}
\def\postsubsubsection{.\@postskip@}
\def\postparagraph{.\@postskip@}
\def\postsubparagraph{.\@postskip@}
\def\@seccntformat#1{\csname pre#1\endcsname\csname the#1\endcsname\csname post#1\endcsname}
\renewcommand{\thesection}{\textup{\arabic{section}}}
\newcommand{\parr}{\par\addvspace{\defskip}}
\newcommand{\theo}[2]{\newtheorem{#1}{#2}[section]}
\newcommand{\deff}[2]{\newenvironment{#1}{\parr\textbf{#2.}}{\parr}}
\def\@begintheorem#1#2[#3]{%
  \deferred@thm@head{\the\thm@headfont \thm@indent
    \@ifempty{#1}{\let\thmname\@gobble}{\let\thmname\@iden}%
    \@ifempty{#2}{\let\thmnumber\@gobble}{\let\thmnumber\@iden}%
    \@ifempty{#3}{\let\thmnote\@gobble}{\let\thmnote\@iden}%
    \thm@notefont{\bfseries\upshape}%
    \indent%
    \thm@swap\swappedhead\thmhead{#1}{#2}{#3}%
    \the\thm@headpunct
    \thmheadnl 
    \hskip\thm@headsep
  }%
  \ignorespaces}
\renewenvironment{proof}{\setcounter{cas}{0}\parr\pushQED{\qed}\normalfont$\square\quad$}{\setcounter{cas}{0}\popQED\@endpefalse\parr}
\newcommand{\labheadi}[1]{\textup{#1)}}
\newcommand{\labheadii}[1]{\textup{(#1)}}
\newenvironment{nums}[1]{\renewcommand{\no}{#1}\begin{enumerate}}{\end{enumerate}}
\newcommand{\eqn}[1]{\begin{equation}#1\end{equation}}
\newcommand{\equ}[1]{\begin{equation*}#1\end{equation*}}
\newcommand{\eqna}[1]{\begin{eqnarray}#1\end{eqnarray}}
\def\LT@makecaption#1#2#3{%
  \LT@mcol\LT@cols c{\hbox to\z@{\hss\parbox[t]\LTcapwidth{%
    \sbox\@tempboxa{#1{#2. }#3}%
    \ifdim\wd\@tempboxa>\hsize
      #1{#2. }#3%
    \else
      \hbox to\hsize{\hfil\box\@tempboxa\hfil}%
    \fi
    \endgraf\vskip\baselineskip}%
  \hss}}}
\newenvironment{casks}{%
  \matrix@check\casks\env@casks
}{%
  \endarray\right.%
}
\def\env@casks{%
  \let\@ifnextchar\new@ifnextchar
  \left\lbrack
  \def\arraystretch{1.2}%
  \array{@{}l@{\quad}l@{}}%
}
\newcounter{numt}
\newcounter{col}
\newcounter{coll}
\renewcommand{\ge}{\geqslant}
\renewcommand{\le}{\leqslant}
\newcommand{\fa}{\,\forall\,}
\newcommand{\bes}{\infty}
\newcommand{\eqi}{\equiv}
\newcommand{\subs}{\subset}
\newcommand{\sm}{\setminus}
\newcommand{\cln}{\colon}
\newcommand{\Ra}{\Rightarrow}
\newcommand{\Lra}{\Leftrightarrow}
\newcommand{\xra}{\xrightarrow}
\newcommand{\ol}{\overline}
\newcommand*{\bw}[1]{#1\nobreak\discretionary{}{\hbox{$\mathsurround=0pt #1$}}{}}
\newcommand{\br}[1]{\bigl(#1\bigr)}
\newcommand{\Br}[1]{\Bigl(#1\Bigr)}
\newcommand{\bgm}[1]{\bigl|#1\bigr|}
\newcommand{\Bm}[1]{\Bigl|#1\Bigr|}
\newcommand{\bc}[1]{\bigl\{#1\bigr\}}
\newcommand{\bb}{\bigm/}
\newcommand{\mbb}{\mathbb}
\newcommand{\mbf}{\mathbf}
\newcommand{\R}{\mbb{R}}
\newcommand{\Z}{\mbb{Z}}
\newcommand{\N}{\mbb{N}}
\newcommand{\T}{\mbb{T}}
\newcommand{\Cbb}{\mbb{C}}
\newcommand{\ga}{\gamma}
\newcommand{\Ga}{\Gamma}
\newcommand{\la}{\lambda}
\newcommand{\rh}{\rho}
\newcommand{\ph}{\varphi}
\DeclareMathOperator{\Ker}{Ker}
\newcommand{\GL}{\mbf{GL}}
\newcommand{\bom}{\boldmath}
\newcommand{\thra}{\twoheadrightarrow}
\begin{document}

\author{O.\,G.\?Styrt}
\title{The existence\\
of a~polynomial factorization map\\
for some compact linear groups}
\date{}
\newcommand{\udk}{512.815.1+512.815.6+512.816.1+512.816.2}
\newcommand{\msc}{14L24+22C05}

\maketitle

{\leftskip\parind\rightskip\parind
It is proved that each of compact linear groups of one special type admits a~polynomial factorization map onto a~real vector space. More exactly, the
group is supposed to be non-commutative one-dimensional and to have two connected components, and its representation should be the direct sum of three
irreducible two-dimensional real representations at least two of them being faithful.

\smallskip

\textbf{Key words\:} Lie group, factorization map of an action.\par}

\section{Introduction}\label{introd}

In this paper, we prove that each of the compact linear groups of one certain type admits a~polynomial factorization map onto a~vector space.

This problem arose from the question when the topological quotient of a~compact linear group is homeomorphic to a~vector space that was researched in
\cite{MAMich,My1,My2,My3,My4}. In~\cite{MAMich}, the result for finite groups is obtained. Namely, the necessary and sufficient condition is that the
group is generated by pseudoreflections. Moreover, for the most of such groups, a~polynomial factorization map onto a~vector space is explicitly
constructed. In~\cite{My1}, the case of infinite groups with commutative connected components is considered. For any $n_1,n_2,n_3\in\N$, denote by
$G(n_1,n_2,n_3)$ the subgroup of the group $\GL_{\R}(\Cbb^3)$ generated by the operators $(z_1,z_2,z_3)\bw\to(\la^{n_1}z_1,\la^{n_2}z_2,\la^{n_3}z_3)$
($\la\in\T$) and $(z_1,z_2,z_3)\bw\to(\ol{z}_1,\ol{z}_2,\ol{z}_3)$. In~\cite{My1}, it is proved that $\Cbb^3\bb\br{G(n_1,n_2,n_3)}\cong\R^5$ for all
$n_1,n_2,n_3\in\N$. However, the proof is based on purely topological aspects. In this work, we will prove that each of the linear groups
$G(1,1,n)\bw\subs\GL_{\R}(\Cbb^3)$ ($n\in\N$) admits an $\R$\д polynomial factorization map $\Cbb^3\thra\R^5$.

Fix an arbitrary number $n\in\N$. Set $G:=G(1,1,n)\bw\subs\GL_{\R}(\Cbb^3)$.

The group $G\bw\subs\GL_{\R}(\Cbb^3)$ is generated by the operators $(z_1,z_2,z_3)\bw\to(\la z_1,\la z_2,\la^n z_3)$ ($\la\in\T$) and
$(z_1,z_2,z_3)\bw\to(\ol{z}_1,\ol{z}_2,\ol{z}_3)$. The map $\Cbb^3\bw\to\Cbb^3$, $z\bw\to w$, where $w_1:=z_1\bw+z_2$,
$w_2:=\ol{(z_1-z_2)}\bw=\ol{z}_1\bw+\ol{z}_2$, $w_3:=z_3$, is an $\R$\д linear automorphism. In terms of the so-defined coordinates $w_1,w_2,w_3$, the
linear group $G\bw\subs\GL_{\R}(\Cbb^3)$ is generated by the operators $(w_1,w_2,w_3)\bw\to(\la w_1,\ol{\la}w_2,\la^n w_3)$ ($\la\in\T$) and
$\tau\cln(w_1,w_2,w_3)\bw\to(w_2,w_1,\ol{w}_3)$. The group $G^0\bw\subs\GL_{\R}(\Cbb^3)$ consists of all the operators
$(w_1,w_2,w_3)\bw\to(\la w_1,\ol{\la}w_2,\la^n w_3)$ ($\la\in\T$). Also, $G=G^0\sqcup(G^0\tau)$ and $G/G^0\cong\Z_2$.

In this paper, we will show that the (obviously, $\R$\д polynomial) map
\equ{
\Cbb^3\bw\to\R^5\cong\Cbb^2\oplus\R,\,(w_1,w_2,w_3)\to\Br{w_2^nw_3+w_1^n\ol{w}_3,w_1w_2,|w_3|^2-\br{|w_1|^{2n}+|w_2|^{2n}}}}
is a~factorization map of the linear group $G\bw\subs\GL_{\R}(\Cbb^3)$.

\section{Proof of the main result}

Consider a~group $\Ga=\{e,\ga_0,\ga',\ga\"\}\cong\Z_2\times\Z_2$, the set $C:=\R_{\ge0}^3\bw\times\Cbb^3$, and the action $\Ga\cln C$ uniquely defined by
the relations $\ga'p:=(r_2,r_1,r_3,v_1,v_2,v_3)$ and $\ga\"p:=(r_1,r_2,r_3,v_2,v_1,v_3)$ ($p\bw=(r_1,r_2,r_3,v_1,v_2,v_3)\bw\in C$). Denote by~$\Ga_0$
the subgroup $\{e,\ga_0\}\subs\Ga$. Since $G/G^0\bw\cong\Z_2\bw\cong\Ga_0$, there exists a~homomorphism $\rh\cln G\thra\Ga_0$ such that $\Ker\rh=G^0$ and
$\rh(\tau)=\ga_0$. The subset $M\subs C$ of all elements $(r_1,r_2,r_3,v_1,v_2,v_3)\bw\in C$ such that $|v_1|=r_2^nr_3$, $|v_2|=r_1^nr_3$,
$|v_3|=r_1r_2$, $v_1v_2=r_3^2v_3^n$ is $\Ga_0$\д invariant.

The map $\pi_0\cln\Cbb^3\bw\to C,\,(w_1,w_2,w_3)\bw\to\br{|w_1|,|w_2|,|w_3|,w_2^nw_3,w_1^n\ol{w}_3,w_1w_2}$ satisfies $\pi_0(\Cbb^3)\bw=M\bw\subs C$, and
its fibres coincide with the orbits of the action $G^0\cln\Cbb^3$. It is easy to see that $\pi_0\circ\tau\eqi\ga_0\circ\pi_0$. Thus,
\eqn{\label{comm}
\begin{array}{ll}
\fa g\in G\quad\quad\quad&\pi_0\circ g\eqi\rh(g)\circ\pi_0;\\
\fa x,y\in\Cbb^3\quad\quad\quad&(y\in Gx)\quad\Lra\quad\Br{\pi_0(y)\in\Ga_0\br{\pi_0(x)}}.
\end{array}}

\begin{stm} If $p,q\in M$ and $q\in\Ga p$, then $q\in\Ga_0p$.
\end{stm}

\begin{proof} We have $q=\ga p$ for some $\ga\in\Ga$. If $\ga\in\Ga_0$, then the claim is evident. Suppose that $\ga\notin\Ga_0$. From the relations
$|\Ga/\Ga_0|=2$ and $\ga\notin\Ga_0$, we obtain that $\Ga_0\ga=\Ga\sm\Ga_0=\{\ga',\ga\"\}$, $\Ga_0q\bw=\Ga_0\ga p\bw=\{\ga'p,\ga\"p\}$. Further,
$\Ga_0M=M$ and $q\in M$, implying $\ga'p\bw\in\Ga_0q\bw\subs\Ga_0M\bw=M$. The element $p\in M$ has the form $p\bw=(r_1,r_2,r_3,v_1,v_2,v_3)$, where
$r_1,r_2,r_3\bw\in\R_{\ge0}$, $v_1,v_2,v_3\bw\in\Cbb$, $|v_1|=r_2^nr_3$, $|v_2|=r_1^nr_3$, $|v_3|=r_1r_2$, $v_1v_2=r_3^2v_3^n$. Also,
$(r_2,r_1,r_3,v_1,v_2,v_3)=\ga'p\bw\in M$ and, hence, $r_1^nr_3=|v_1|=r_2^nr_3$, $(r_1^n-r_2^n)r_3=0$.

\begin{cas} $r_3=0$.
\end{cas}

We have $|v_1|=r_2^nr_3=0$ and $|v_2|=r_1^nr_3=0$. Thus, $v_1=v_2=0$, $p=\ga\"p\in\Ga_0q$, $q\in\Ga_0p$.

\begin{cas} $r_3\ne0$.
\end{cas}

Since $(r_1^n-r_2^n)r_3=0$, we have $r_1^n=r_2^n$, $r_1=r_2$, $p=\ga'p\in\Ga_0q$, $q\in\Ga_0p$.
\end{proof}

\begin{imp}\label{gaga} For any $p,q\in M$, we have $(q\in\Ga p)\Lra(q\in\Ga_0p)$.
\end{imp}

Denote by~$\pi$ the map
\equ{
C\to C,\quad\quad\quad(r_1,r_2,r_3,v_1,v_2,v_3)\to\br{(r_1^n-r_2^n)^2,r_1r_2,r_3^2,v_1+v_2,v_1v_2,v_3}.}

\begin{stm}\label{sp} The map $\Cbb^2\to\Cbb^2,\,(v_1,v_2)\to(v_1+v_2,v_1v_2)$ is surjective, and its fibres coincide with the subsets
$\bc{(v_1,v_2),(v_2,v_1)}\subs\Cbb^2$.
\end{stm}

We omit the proof since it is clear.

It is easy to see that, for any $m\in\N$, the map $\R_{\ge0}^2\to\R\times\R_{\ge0},\,(r_1,r_2)\to(r_1^m-r_2^m,r_1r_2)$ is a~bijection. This implies the
following statement.

\begin{stm}\label{dp} For any $m\in\N$, the map $\R_{\ge0}^2\to\R_{\ge0}^2,\,(r_1,r_2)\to\br{(r_1^m-r_2^m)^2,r_1r_2}$ is surjective, and its fibres
coincide with the subsets $\bc{(r_1,r_2),(r_2,r_1)}\subs\R_{\ge0}^2$.
\end{stm}

\begin{imp}\label{piga} The map $\pi\cln C\bw\to C$ is surjective, and its fibres coincide with the orbits of the action $\Ga\cln C$.
\end{imp}

\begin{proof} Follows from Statements \ref{sp} and~\ref{dp}.
\end{proof}

\begin{prop}\label{pip} The map $(\pi\circ\pi_0)\cln\Cbb^3\bw\to C$ satisfies $(\pi\circ\pi_0)(\Cbb^3)\bw=\pi(\Ga M)\bw\subs C$, and its fibres coincide
with the orbits of the action $G\cln\Cbb^3$.
\end{prop}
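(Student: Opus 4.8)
The plan is to deduce the statement purely by orbit chasing from the facts already in hand: the identity $\pi_0(\Cbb^3)=M$ together with the fibre description of $\pi_0$, the surjectivity and $\Ga$-invariance of $\pi$ recorded in Corollary~\ref{piga}, the passage from $\Ga$-orbits to $\Ga_0$-orbits on $M$ provided by Corollary~\ref{gaga}, and the intertwining relations~\eqref{comm}. No fresh computation should be required.

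First I would dispose of the image. Since $\pi_0(\Cbb^3)=M$, one has $(\pi\circ\pi_0)(\Cbb^3)=\pi(M)$, so it suffices to prove $\pi(M)=\pi(\Ga M)$. The inclusion $\pi(M)\subs\pi(\Ga M)$ is immediate from $M\subs\Ga M$. For the reverse inclusion, Corollary~\ref{piga} identifies the fibres of $\pi$ with the $\Ga$-orbits, so $\pi$ is constant on each $\Ga$-orbit; hence $\pi(\ga p)=\pi(p)\in\pi(M)$ for every $\ga\in\Ga$ and $p\in M$, which gives $\pi(\Ga M)\subs\pi(M)$. Thus $(\pi\circ\pi_0)(\Cbb^3)=\pi(M)=\pi(\Ga M)$, as claimed.

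The remaining task is the fibre assertion, i.e. the equivalence $(\pi\circ\pi_0)(x)=(\pi\circ\pi_0)(y)\Lra y\in Gx$ for $x,y\in\Cbb^3$. The easy direction is $y\in Gx\Ra(\pi\circ\pi_0)(x)=(\pi\circ\pi_0)(y)$: by the second line of~\eqref{comm}, $y\in Gx$ gives $\pi_0(y)\in\Ga_0\br{\pi_0(x)}\subs\Ga\br{\pi_0(x)}$, and applying the $\Ga$-invariance of $\pi$ yields $\pi\br{\pi_0(y)}=\pi\br{\pi_0(x)}$.

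The substantive direction is the converse, and this is where the genuine content lies. Assume $\pi\br{\pi_0(x)}=\pi\br{\pi_0(y)}$. By Corollary~\ref{piga} the points $\pi_0(x)$ and $\pi_0(y)$ share a $\Ga$-orbit, i.e. $\pi_0(y)\in\Ga\br{\pi_0(x)}$. Since $\pi_0(\Cbb^3)=M$, both points lie in $M$, which is exactly the hypothesis needed to apply Corollary~\ref{gaga} and upgrade this to the $\Ga_0$-orbit relation $\pi_0(y)\in\Ga_0\br{\pi_0(x)}$; then the second line of~\eqref{comm} delivers $y\in Gx$. I expect this upgrade from $\Ga$ to $\Ga_0$ to be the only real obstacle: the extra elements $\ga'$ and $\ga\"$ of $\Ga$ could a~priori relate $\pi_0(x)$ and $\pi_0(y)$ without any $g\in G$ relating $x$ and $y$, and it is precisely the confinement of both points to $M$, via Corollary~\ref{gaga}, that rules this out.
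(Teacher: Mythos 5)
Your proposal is correct and follows essentially the same route as the paper: both reduce the image claim to $\pi(M)=\pi(\Ga M)$ via Corollary~\ref{piga}, and both establish the fibre claim by linking $(y\in Gx)$, $(q\in\Ga_0 p)$, $(q\in\Ga p)$, and $\pi(p)=\pi(q)$ through~\eqref{comm}, Corollary~\ref{gaga}, and Corollary~\ref{piga}. The only differences are presentational (you argue the two implications separately and spell out the inclusions behind $\pi(M)=\pi(\Ga M)$, where the paper chains biconditionals), so nothing further is needed.
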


\begin{proof} Recall that $\pi_0(\Cbb^3)\bw=M\bw\subs C$.

By Corollary~\ref{piga}, $\pi(M)=\pi(\Ga M)$. Thus, $(\pi\circ\pi_0)(\Cbb^3)\bw=\pi\br{\pi_0(\Cbb^3)}\bw=\pi(M)\bw=\pi(\Ga M)$.

It remains to prove that $(y\in Gx)\Lra\br{(\pi\circ\pi_0)(x)=(\pi\circ\pi_0)(y)}$ for all $x,y\in\Cbb^3$.

Take arbitrary elements $x,y\in\Cbb^3$. Since $\pi_0(\Cbb^3)\bw=M\bw\subs C$, we have $p:=\pi_0(x)\in M\bw\subs C$ and $q:=\pi_0(y)\in M\bw\subs C$. It
follows from Corollary~\ref{gaga} that $(q\in\Ga p)\Lra(q\in\Ga_0p)$. Further, by Corollary~\ref{piga}, $(q\in\Ga p)\Lra\br{\pi(p)=\pi(q)}$. According
to~\eqref{comm}, $(y\in Gx)\Lra(q\in\Ga_0p)$. So, $(y\in Gx)\Lra(q\in\Ga_0p)\Lra(q\in\Ga p)\Lra\br{\pi(p)=\pi(q)}$, $(y\in Gx)\Lra\br{\pi(p)=\pi(q)}$.
\end{proof}

Consider the subset $L\subs C$ of all elements $(s_1,s_2,s_3,u_1,u_2,u_3)\bw\in C$ such that $|u_3|=s_2$, $u_2=s_3u_3^n$,
$|u_1^2|+|u_1^2-4u_2|-|4u_2|=2s_1s_3$.

\begin{lemma}\label{pil} One has $\pi^{-1}(L)=\Ga M$.
\end{lemma}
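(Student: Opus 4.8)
The plan is to rewrite membership in each of the sets $\Ga M$ and $\pi^{-1}(L)$ as an explicit system of relations on the coordinates of a point $p=(r_1,r_2,r_3,v_1,v_2,v_3)\in C$, and then to check that the two systems are equivalent.

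First I would describe $\Ga M$. Since $M$ is $\Ga_0$-invariant and $\Ga=\Ga_0\cup\ga'\Ga_0$, we have $\Ga M=M\cup\ga'M$. Reading off the relations defining $\ga'M$ from those defining~$M$ (applying $\ga'$ merely swaps $r_1$ and~$r_2$, so precisely the first two relations interchange their right-hand sides), I would conclude that $p\in\Ga M$ if and only if
\equ{
|v_3|=r_1r_2,\qquad v_1v_2=r_3^2v_3^n,\qquad\bc{|v_1|,|v_2|}=\bc{r_1^nr_3,r_2^nr_3}}
(the last being an equality of unordered pairs: membership in~$M$ forces $(|v_1|,|v_2|)=(r_2^nr_3,r_1^nr_3)$, and membership in~$\ga'M$ the reverse order).

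Next I would unwind the condition $\pi(p)\in L$. Writing $\pi(p)=(s_1,s_2,s_3,u_1,u_2,u_3)$ with $s_1=(r_1^n-r_2^n)^2$, $s_2=r_1r_2$, $s_3=r_3^2$, $u_1=v_1+v_2$, $u_2=v_1v_2$, $u_3=v_3$, the first two defining relations of~$L$ read literally $|v_3|=r_1r_2$ and $v_1v_2=r_3^2v_3^n$. For the third relation the key identity is $u_1^2-4u_2=(v_1-v_2)^2$; combined with the parallelogram law $|v_1+v_2|^2+|v_1-v_2|^2=2\br{|v_1|^2+|v_2|^2}$ and with $|4u_2|=4|v_1||v_2|$, the quantity $|u_1^2|+|u_1^2-4u_2|-|4u_2|$ collapses to $2\br{|v_1|-|v_2|}^2$, so the third relation becomes $\br{|v_1|-|v_2|}^2=(r_1^n-r_2^n)^2r_3^2$.

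It then remains to match the two systems. The first two relations already coincide, so the lemma reduces to showing that, under them, the pair equality $\bc{|v_1|,|v_2|}=\bc{r_1^nr_3,r_2^nr_3}$ is equivalent to the single scalar equation $\br{|v_1|-|v_2|}^2=(r_1^n-r_2^n)^2r_3^2$. The forward implication is immediate. For the converse I would observe that the first two relations give $|v_1||v_2|=|v_1v_2|=r_3^2|v_3|^n=r_3^2(r_1r_2)^n=(r_1^nr_3)(r_2^nr_3)$, while the scalar equation rewrites as $\br{|v_1|-|v_2|}^2=\br{r_1^nr_3-r_2^nr_3}^2$; hence the pairs $(|v_1|,|v_2|)$ and $(r_1^nr_3,r_2^nr_3)$ have the same image under $(a,b)\mapsto\br{(a-b)^2,ab}$, and Statement~\ref{dp} with $m=1$ forces them into a common fibre, i.e.\ they agree as unordered pairs. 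I expect this final step --- recovering an unordered pair of non-negative reals from its product and the square of its difference --- to be the only substantive point; everything else is substitution together with the two elementary identities above.
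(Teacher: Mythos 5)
Your proposal is correct and follows essentially the same route as the paper: the decomposition $\Ga M=M\cup\ga'M$, the identity $|u_1^2|+|u_1^2-4u_2|-|4u_2|=2\br{|v_1|-|v_2|}^2$ via the parallelogram law, and Statement~\ref{dp} with $m=1$ to convert the unordered-pair condition into scalar equations. The only cosmetic difference is where the redundancy of the product equation $|u_2|=s_2^ns_3$ is discharged (you fold it into the converse direction; the paper lists it and then notes it follows from the first two relations), which does not change the argument.
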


\begin{proof} We should show that $(p\in\Ga M)\Lra\br{\pi(p)\in L}$ for any $p\bw\in C$.

Take an arbitrary element $p\bw=(r_1,r_2,r_3,v_1,v_2,v_3)\bw\in C$.

We have $\ga'p=(r_2,r_1,r_3,v_1,v_2,v_3)\bw\in C$ and $\pi(p)=(s_1,s_2,s_3,u_1,u_2,u_3)\bw\in C$, where $s_1:=(r_1^n-r_2^n)^2$, $s_2:=r_1r_2$,
$s_3:=r_3^2$, $u_1:=v_1+v_2$, $u_2:=v_1v_2$, $u_3:=v_3$.

Note that $\Ga=\Ga_0\cup\ga'\Ga_0$. Also, $\Ga_0M=M$. Hence, $\Ga M=(\Ga_0M)\cup(\ga'\Ga_0M)=M\cup(\ga'M)$,
\eqn{\label{gam}
(p\in\Ga M)\quad\Lra\quad\br{(p\in M)\lor(p\in\ga'M)}\quad\Lra\quad\br{(p\in M)\lor(\ga'p\in M)}.}

Clearly, $p\in M$ if and only if $|v_1|=r_2^nr_3$, $|v_2|=r_1^nr_3$, $|v_3|=r_1r_2$, $v_1v_2=r_3^2v_3^n$. Further, $\ga'p\in M$ if and only if
$|v_1|=r_1^nr_3$, $|v_2|=r_2^nr_3$, $|v_3|=r_1r_2$, $v_1v_2=r_3^2v_3^n$. By~\eqref{gam}, $p\in\Ga M$ if and only if
\begin{nums}{-1}
\item\label{com} $|u_3|=s_2$, $u_2=s_3u_3^n$\~
\item\label{dif} $\br{|v_1|,|v_2|}=(r_2^nr_3,r_1^nr_3)\in\R_{\ge0}^2$ or $\br{|v_1|,|v_2|}=(r_1^nr_3,r_2^nr_3)\in\R_{\ge0}^2$.
\end{nums}
According to Statement~\ref{dp}, the condition~\ref{dif} holds if and only if $\br{|v_1|-|v_2|}^2=(r_1^nr_3-r_2^nr_3)^2$ and
$|v_1|\cdot|v_2|=(r_1^nr_3)(r_2^nr_3)$. We have $|v_1|\cdot|v_2|=|u_2|$, $(r_1^nr_3)(r_2^nr_3)=s_2^ns_3$, $(r_1^nr_3-r_2^nr_3)^2\bw=s_1s_3$. Also,
$(v_1-v_2)^2=(v_1+v_2)^2-4v_1v_2=u_1^2-4u_2$,
\begin{gather*}
2\br{|v_1|-|v_2|}^2=2\br{|v_1|^2+|v_2|^2}-4\br{|v_1|\cdot|v_2|}=|v_1+v_2|^2+|v_1-v_2|^2-4|u_2|=\\
=\bgm{(v_1+v_2)^2}+\bgm{(v_1-v_2)^2}-4|u_2|=|u_1^2|+|u_1^2-4u_2|-|4u_2|.
\end{gather*}
Therefore, the condition~\ref{dif} holds if and only if $|u_1^2|+|u_1^2-4u_2|-|4u_2|=2s_1s_3$ and $|u_2|=s_2^ns_3$. Thus, $p\in\Ga M$ if and only if
$|u_3|=s_2$, $u_2=s_3u_3^n$, $|u_1^2|+|u_1^2-4u_2|-|4u_2|=2s_1s_3$, $|u_2|=s_2^ns_3$. Note that $\br{(|u_3|=s_2)\land(u_2=s_3u_3^n)}\Ra(|u_2|=s_2^ns_3)$.
Hence, $(p\in\Ga M)\Lra\br{\pi(p)\in L}$.
\end{proof}

\begin{imp}\label{pigm} One has $\pi(\Ga M)=L$.
\end{imp}

\begin{proof} According to Corollary~\ref{piga}, the map $\pi\cln C\bw\to C$ is surjective. Hence, $\pi\br{\pi^{-1}(L)}=L$. By Lemma~\ref{pil},
$\pi^{-1}(L)=\Ga M$, $\pi(\Ga M)=\pi\br{\pi^{-1}(L)}=L$.
\end{proof}

\begin{imp}\label{pp} The map $(\pi\circ\pi_0)\cln\Cbb^3\bw\to C$ satisfies $(\pi\circ\pi_0)(\Cbb^3)\bw=L\bw\subs C$, and its fibres coincide with the
orbits of the action $G\cln\Cbb^3$.
\end{imp}

\begin{proof} Follows from Proposition~\ref{pip} and Corollary~\ref{pigm}.
\end{proof}

Consider the maps
\equ{
\begin{aligned}
\ph_0&\cln&L&\to\Cbb^2\oplus\R,&(s_1,s_2,s_3,u_1,u_2,u_3)&\to(u_1,u_3,s_3-s_1);\\
\ph&\cln&\Cbb^2\oplus\R&\to\Cbb^2\oplus\R,&(a,b,c)&\to\br{a,b,c-2|b|^n};\\
(\ph\circ\ph_0)&\cln&L&\to\Cbb^2\oplus\R,&(s_1,s_2,s_3,u_1,u_2,u_3)&\to\br{u_1,u_3,s_3-s_1-2|u_3|^n}.
\end{aligned}}

\begin{lemma}\label{fib} The map $\ph_0\cln L\to\Cbb^2\oplus\R$ is a~bijection.
\end{lemma}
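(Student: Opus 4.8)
The plan is to show $\ph_0$ is a bijection by verifying that each point $(a,b,c)\in\Cbb^2\oplus\R$ has exactly one preimage in $L$. Since $\ph_0$ sends $(s_1,s_2,s_3,u_1,u_2,u_3)$ to $(u_1,u_3,s_3-s_1)$, a preimage must have $u_1=a$ and $u_3=b$, so only $s_1,s_2,s_3,u_2$ remain to be recovered, subject to $s_3-s_1=c$. The defining relations of $L$ determine these in terms of a single unknown $s_3$: from $|u_3|=s_2$ we get $s_2=|b|$, from $u_2=s_3u_3^n$ we get $u_2=s_3b^n$, and $s_1=s_3-c$. Thus finding a preimage reduces to solving for $s_3$ in the admissible range $s_3\ge\max(0,c)$, which is exactly what makes $s_3\ge0$ and $s_1=s_3-c\ge0$.

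Substituting these expressions into the third relation $|u_1^2|+|u_1^2-4u_2|-|4u_2|=2s_1s_3$ and writing $\al:=a^2$, $\be:=4b^n$ (so $|\be|=4|b|^n$), its left-hand side becomes $h(s_3)$ and its right-hand side becomes $R(s_3)$, where $h(s):=|\al|+|\al-s\be|-s|\be|$ and $R(s):=2s^2-2cs=2s(s-c)$. Hence a preimage of $(a,b,c)$ corresponds precisely to a root $s_3$ of $h(s_3)=R(s_3)$ with $s_3\ge\max(0,c)$, and the whole statement amounts to showing there is exactly one such root.

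This existence-and-uniqueness for the scalar equation is the main point. I would record that $h\ge0$ everywhere (from the reverse triangle inequality $|\al-s\be|\ge\bgm{|\al|-s|\be|}$), that $h\le2|\al|$ is bounded above (from $|\al-s\be|\le|\al|+s|\be|$), and that $h$ is convex, being the sum of the affine map $|\al|-s|\be|$ and the convex map $s\mapsto|\al-s\be|$. A convex function bounded above on $[0,\infty)$ is non-increasing, so $h$ is non-increasing. On the other hand $R$ is strictly increasing on $[\max(0,c),\infty)$ with $R\br{\max(0,c)}=0$ and $R(s)\to\infty$ (its vertex $s=c/2$ lies to the left of $\max(0,c)$). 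Therefore $\Phi:=h-R$ is continuous and strictly decreasing on $[\max(0,c),\infty)$, with $\Phi\br{\max(0,c)}=h\br{\max(0,c)}\ge0$ and $\Phi(s)\to-\infty$; so $\Phi$ has a unique zero $s_3^*$, lying in $[\max(0,c),\infty)$.

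It then remains to confirm that the recovered tuple $\br{s_3^*-c,\,|b|,\,s_3^*,\,a,\,s_3^*b^n,\,b}$ actually lies in $L$: the coordinates required to be nonnegative are so because $s_3^*\ge\max(0,c)$, and the three defining relations hold by construction, the third because $h(s_3^*)=R(s_3^*)$. Its image under $\ph_0$ is $(a,b,c)$, and uniqueness of $s_3^*$ yields uniqueness of the preimage, so $\ph_0$ is a bijection. The case $b=0$ needs no separate treatment: there $h\equiv2|\al|$ is constant, hence still non-increasing, and $\Phi$ is still strictly decreasing. The only delicate point is the monotonicity-and-boundedness analysis of $h$ against the increasing parabola $R$; everything else is bookkeeping.
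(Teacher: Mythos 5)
Your proposal is correct, and its skeleton coincides with the paper's: you parametrize the candidate preimages of $(a,b,c)$ by the single scalar $s_3=t$ ranging over $[\max\{0,c\},+\infty)$, reduce the lemma to existence and uniqueness of a root of a scalar equation on that ray, and win by playing a non-increasing function against a strictly increasing one that vanishes at the left endpoint and tends to $+\infty$. The only real divergence is in how the monotonicity of the ``triangle-inequality part'' is established. The paper takes $f(t)=f_+(t)-f_-(t)$ with $f_+(t)=2(t-c)t$ and $f_-(t)=|a^2|+|a^2-4tb^n|-|4tb^n|$ (your $R$ and $h$, up to an overall sign), and proves that $f_-$ is non-increasing on the ray by a direct computation: for $t'\le t''$ it writes $f_-(t')-f_-(t'')=|a^2-4t'b^n|+|4t''b^n-4t'b^n|-|a^2-4t''b^n|\ge0$ by the triangle inequality. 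You instead observe that $h$ is convex on $\R$ (an affine function plus a norm composed with an affine map) and bounded above by $2|a^2|$ on $[0,+\infty)$, and invoke the fact that a convex function bounded above on a ray is non-increasing there. Both are valid; the paper's computation is completely self-contained, while your route is more conceptual but leans on a convexity fact that you should at least prove in one line (if $h(s_1)<h(s_2)$ for some $s_1<s_2$, the chord-slope inequality forces $h(s)\to+\infty$ as $s\to+\infty$, contradicting the bound). Everything else in your write-up --- the nonnegativity constraints $s_3\ge0$ and $s_1=s_3-c\ge0$, the verification that the recovered tuple lies in $L$ and maps to $(a,b,c)$, and the remark that $b=0$ needs no special treatment --- matches the paper's bookkeeping.
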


\begin{proof} We need to prove that $\bgm{\ph_0^{-1}(a,b,c)}=1$ for any $(a,b,c)\in\Cbb^2\oplus\R$.

Take an arbitrary element $(a,b,c)\in\Cbb^2\oplus\R$.

The subset $\ph_0^{-1}(a,b,c)\subs L$ consists exactly of all elements $(t-c,|b|,t,a,tb^n,b)\bw\in C$ ($t\in\R_{\ge0}$, $t\ge c$,
$|a^2|+|a^2-4tb^n|-|4tb^n|=2(t-c)t$).

Set $f_+(t):=2(t-c)t$, $f_-(t):=|a^2|+|a^2-4tb^n|-|4tb^n|$, and $f(t):=f_+(t)-f_-(t)$ ($t\in\R$). We should show that there exists a~unique number
$t\in\R_{\ge0}$ such that $t\ge c$ and $f(t)=0$. The subset $I:=\{t\in\R_{\ge0}\cln t\ge c\}\subs\R_{\ge0}$ has the form $[t_0;+\bes)$,
$t_0:=\max\{0,c\}\in\R_{\ge0}$.

The function~$f_+$ is strictly increasing on the subset $I\subs\R_{\ge0}$ and satisfies
\eqna{
&&f_+(t_0)=0;\label{zer}\\
&&f_+(t)\xra[t\to\bes]{}+\bes.\label{inf}}

Obviously, $f_+,f_-\in C(\R)$, implying $f\in C(\R)$.

Due to the triangle inequality, $f_-(t)=|a^2|+|a^2-4tb^n|-|4tb^n|\ge0$ ($t\in\R$). In particular, $f_-(t_0)\ge0$. By~\eqref{zer}, $f(t_0)\le0$.

If $t',t\"\in I$ and $t'\le t\"$, then $t'\in I\subs\R_{\ge0}$, $0\le t'\le t\"$, $|4t\"b^n|-|4t'b^n|=|4t\"b^n-4t'b^n|$ and, due to the triangle
inequality,
\begin{align*}
f_-(t')-f_-(t\")&=\br{|a^2|+|a^2-4t'b^n|-|4t'b^n|}-\br{|a^2|+|a^2-4t\"b^n|-|4t\"b^n|}=\\
&=|a^2-4t'b^n|-|a^2-4t\"b^n|+\br{|4t\"b^n|-|4t'b^n|}=\\
&=|a^2-4t'b^n|+|4t\"b^n-4t'b^n|-|a^2-4t\"b^n|\ge0,
\end{align*}
$f_-(t')\ge f_-(t\")$. We see that the function~$f_-$ is decreasing on the subset $I\subs\R_{\ge0}$. By~\eqref{inf}, $f(t)\xra[t\to+\bes]{}+\bes$.

On the subset $I\subs\R_{\ge0}$, the function~$f_+$ is strictly increasing and the function~$f_-$ is decreasing. Hence, the function~$f$ is strictly
increasing on the subset $I=[t_0;+\bes)\subs\R_{\ge0}$. Also, $f\in C(\R)$, $f(t_0)\le0$, and $f(t)\xra[t\to+\bes]{}+\bes$. Thus,
$\Bm{I\cap\br{f^{-1}(0)}}=1$ as required.
\end{proof}

\begin{imp}\label{ff} The map $(\ph\circ\ph_0)\cln L\to\Cbb^2\oplus\R$ is a~bijection.
\end{imp}

\begin{proof} One can easily see that the map $\ph\cln\Cbb^2\oplus\R\to\Cbb^2\oplus\R$ is a~bijection. It remains to apply Lemma~\ref{fib}.
\end{proof}

By Corollary~\ref{pp}, the map $(\pi\circ\pi_0)\cln\Cbb^3\bw\to C$ can be considered as the surjective map $(\pi\circ\pi_0)\cln\Cbb^3\bw\thra L$ whose
fibres coincide with the orbits of the action $G\cln\Cbb^3$. According to Corollary~\ref{ff}, the map
$(\ph\circ\ph_0\circ\pi\circ\pi_0)\cln\Cbb^3\bw\to\Cbb^2\oplus\R$ is surjective, and its fibres coincide with the orbits of the action $G\cln\Cbb^3$.
This means that the map $(\ph\circ\ph_0\circ\pi\circ\pi_0)\cln\Cbb^3\bw\to\Cbb^2\oplus\R$ is a~factorization map of the linear group
$G\bw\subs\GL_{\R}(\Cbb^3)$. If $w=(w_1,w_2,w_3)\in\Cbb^3$, then
\equ{\begin{array}{c}
\begin{aligned}
\pi_0(w)&=\br{|w_1|,|w_2|,|w_3|,w_2^nw_3,w_1^n\ol{w}_3,w_1w_2};\\
(\pi\circ\pi_0)(w)&=\Br{\br{|w_1|^n-|w_2|^n}^2,|w_1|\cdot|w_2|,|w_3|^2,w_2^nw_3+w_1^n\ol{w}_3,(w_1w_2)^n\cdot|w_3|^2,w_1w_2};
\end{aligned}\\
\begin{aligned}
(\ph\circ\ph_0\circ\pi\circ\pi_0)(w)&=\Br{w_2^nw_3+w_1^n\ol{w}_3,w_1w_2,|w_3|^2-\br{|w_1|^n-|w_2|^n}^2-2|w_1w_2|^n}=\\
&=\Br{w_2^nw_3+w_1^n\ol{w}_3,w_1w_2,|w_3|^2-\br{|w_1|^{2n}+|w_2|^{2n}}}.
\end{aligned}
\end{array}}
Thus, the map $(\ph\circ\ph_0\circ\pi\circ\pi_0)\cln\Cbb^3\bw\to\Cbb^2\oplus\R$ is $\R$\д polynomial.

We see that the linear group $G\bw\subs\GL_{\R}(\Cbb^3)$ admits an $\R$\д polynomial factorization map
$(\ph\circ\ph_0\circ\pi\circ\pi_0)\cln\Cbb^3\bw\to\Cbb^2\oplus\R$. This completely proves the main result of the paper.

\newpage

\end{document}